%%%%%%%	   New bound combining the integrality gap and optimal solution entries

\documentclass[runningheads]{llncs}

%\textwidth15cm \textheight20cm \oddsidemargin1cm
%\evensidemargin1cm
%\frenchspacing

\pagestyle{plain}

\usepackage{url, hyperref}

\usepackage[T1]{fontenc}
\usepackage{amssymb}
\usepackage{bm}
\usepackage{graphicx}
\usepackage{psfrag}
\usepackage{color}
\usepackage{amsmath,color}
\usepackage{enumerate}
\usepackage{algpseudocode} 
\usepackage{lmodern}
\usepackage{todonotes}
\usepackage{xy}
\input xy
\xyoption{all}

\newcommand{\excise}[1]{}%{$\star$\textsc{#1}$\star$}

\newcommand{\ve}{\boldsymbol}

\newtheorem{thm}{Theorem}
\newtheorem{cor}[thm]{Corollary}

%\theoremstyle{definition}
%\newtheorem{alg}{Algorithm}
%\newtheorem{example}[thm]{Example}{\bfseries}{\normalfont}
%\newtheorem{remark}[thm]{Remark}{\bfseries}{\normalfont}
%\newtheorem{obs}[thm]{Observation}
%\newtheorem{defn}[thm]{Definition}{\bfseries}{\normalfont}

%\newcounter{separated}

\newcommand{\be}{\begin{eqnarray}}
\newcommand{\bea}{\begin{eqnarray*}}
\newcommand{\ee}{\end{eqnarray}}
\newcommand{\eea}{\end{eqnarray*}}

\newcommand{\diag}{\mathrm{diag}}

\newcommand{\lin}{\mathrm{lin}}

\newcommand{\conv}{\mathrm{conv}}
\newcommand{\vol}{\mathrm{vol}}

\newcommand{\R}{\mathbb R}
\newcommand{\Z}{\mathbb Z}

\newcommand{\KP}{{P}}

\newcommand\supp{\mathrm{supp}}

%\newcommand{\interior}{\,\mathrm{int}\;}

%newcommand\defas{:=}

%\renewcommand\implies{\Rightarrow}

%
\newcommand\bb{\ve b}%
\newcommand\cc{\ve c}%
\newcommand\xx{\ve x}%
\newcommand\zz{\ve z}%

\newcommand\BB{\ve B}%
\renewcommand\AA{\ve A}%
\renewcommand\SS{\ve S}

\newcommand\DD{\ve D}%
\newcommand\OO{\ve O}%

\newcommand\IG{\mathrm{IG}}%
\newcommand\IP{\mathrm{IP}}%
\newcommand\LP{\mathrm{LP}}%
\newcommand{\norm}[1]{\left\Vert #1\right\Vert }%
\newcommand\AAhat{\hat{\AA\;}\negthickspace}%

\def\udots{\mathinner{\mkern1mu\raise1pt\vbox{\kern7pt\hbox{.}}\mkern2mu
\raise4pt\hbox{.}\mkern2mu\raise7pt\hbox{.}\mkern1mu}}

%math symbols taking arguments

%%%%%%%%%%%%%%%%%%%%%%%%%%%%%%%%%%%%%%%%%%%%%%%%%%%%%%%%%%%%%%%%%%%%%%%%

\title{Sparsity and integrality gap transference bounds for integer programs}

\author{Iskander Aliev\inst{1}, Marcel Celaya\inst{1}, and Martin Henk\inst{2}}

\authorrunning{I. Aliev et al.}

\institute{Cardiff University\\
\email{{\{alievi,celayam\}@cardiff.ac.uk}}
\and Technische Universit\"at Berlin\\
\email{henk@math.tu-berlin.de}
}

%\thanks{The second  author was funded by the Deutsche Forschungsgemeinschaft (DFG,
%German Research Foundation) under Germany's Excellence Strategy ---
%The Berlin Mathematics Research Center MATH+(EXC-2046/1, project ID: 390685689).}
\date{\today }

\begin{document}
\maketitle
\thispagestyle{plain}

\begin{abstract}
	\noindent
	We obtain new transference bounds that connect two active areas of research: proximity and sparsity of solutions to integer programs. Specifically, we study the additive integrality gap of the integer linear programs $\min\{{\ve c}\cdot{\ve x}: {\ve x}\in P\cap\Z^n\}$, where $P=\{{\ve x}\in\R^n: \AA{\ve x}={\ve b}, {\ve x}\ge {\ve 0}\}$ is a polyhedron in the standard form determined by an integer $m\times n$ matrix $\AA$ and an integer vector ${\ve b}$. The main result of the paper gives an upper bound for the integrality gap that drops exponentially in the size of support of the optimal solutions corresponding to the vertices of the integer hull of $P$. Additionally, we obtain a new proximity bound that estimates the $\ell_2$-distance from a vertex of $P$ to its nearest integer point in the polyhedron $P$. The proofs make use of the results from the geometry of numbers and convex geometry.

\keywords{Integrality gap  \and Proximity \and Sparsity.}
\end{abstract}

\section{Introduction and main results}

The proximity and sparsity of solutions to integer programs are two active areas of research in the theory of mathematical programming.

%The distance-sparsity transference bounds obtained in \cite{ACHW} link two active directions of research in the theory of integer programming: {\em proximity} and {\em sparsity} of solutions to integer programs. 
%%

Proximity-type results study the quality of approximation of the solutions to integer programs by the solutions of linear programming relaxations. This is a traditional research topic with the first contributions dated back at least to Gomory \cite{Gomory_sensitivity,Gomory_polyhedra}. The most influential results in this area also include the proximity bounds by Cook et al. \cite{MR839604} and by Eisenbrand and Weismantel \cite{MR3775840}. For more recent contributions, we refer the reader to Celaya et al. \cite{CKPW}, Lee et al. \cite{LPSX,LPSX1}, and Paat, Weismantel and Weltge \cite{PWW}.

Sparsity-type results study the size of support of solutions to integer programs. This area of research takes its origin from the integer Carath\'eodory theorems of Cook, Fonlupt and Schrijver \cite{MR830593} and Seb\H{o} \cite{Sebo} and later major contributions by Eisenbrand and Shmonin \cite{EisenbrandShmonin2006}.  In recent years, this topic has been studied in numerous papers, including Abdi et al. \cite{ACGT}, Aliev et al. \cite{AADO,Support,ADON2017}, Berndt, Jansen and Klein \cite{BJK}, Dubey and Liu \cite{DL} and Oertel, Paat and Weismantel \cite{OPW}.  

Recent works of Lee et al. \cite{LPSX} and Aliev et al. \cite{ACHW} show that the proximity and sparsity areas are highly interconnected. The paper \cite{ACHW} establishes {\em transference bounds} that link both areas in two {\em special cases}: for  corner polyhedra and knapsacks with positive entries. Remarkably, this gives a drastic improvement on the previously known proximity bounds for knapsacks obtained in \cite{AHO}.

In this paper, we establish the first transference bounds that involve the integrality gap of integer linear programs and hold in the {\em general case}, addressing a future research question posed in  \cite{ACHW}. The proofs explore a new geometric approach that combines Minkowski's geometry of numbers and box slicing inequalities.

Specifically, 
for a matrix $\AA \in \Z^{m\times n}$ with $m<n$ and $\ve b \in \Z^m$,
%Without loss of generality, we may assume that $\AA$ has rank $m$.
%
%\be\label{matrix_partition}
%\rank(A)=m\,.
%\ee
we define the polyhedron
\bea
\KP(\AA, {\ve b})=\{{\ve x}\in \R^n_{\ge 0}: \AA{\ve x}={\ve b}\}\,
\eea
and assume that $\KP(\AA, {\ve b})$ contains integer points. 
Given a cost vector $\cc\in\R^{n}$, we consider the integer linear programming
problem 
\begin{eqnarray}
\min\{\cc\cdot\xx:\xx\in\KP(\AA,\bb)\cap\Z^{n}\}\,,\label{initial_IP}
\end{eqnarray}
where $\cc\cdot\xx$ stands for the standard inner product.
A very successful and traditional approach for solving optimisation problems 
of the form \eqref{initial_IP} is based on solving its
linear programming relaxation %to (\ref{initial_IP}) is the problem
\begin{eqnarray}
\min\{\cc\cdot\xx:\xx\in\KP(\AA,\bb)\}\,,\label{initial_LP}
\end{eqnarray}
obtained
by dropping the integrality constraint. Subsequently,  various methods are used to construct a feasible integer solution to \eqref{initial_IP} from a fractional solution of \eqref{initial_LP}.

Suppose that (\ref{initial_IP}) is feasible and bounded. Let $\IP(\AA,\bb,\cc)$ and $\LP(\AA,\bb,\cc)$ denote the optimal
values of (\ref{initial_IP}) and (\ref{initial_LP}), respectively.
We will focus on estimating the  \emph{(additive) integrality gap} $\IG(\AA,\bb,\cc)$ defined as
\begin{eqnarray*}
\IG(\AA,\bb,\cc)=\IP(\AA,\bb,\cc)-\LP(\AA,\bb,\cc)\,.
\end{eqnarray*}
The integrality gap is a fundamental proximity characteristic of the problem (\ref{initial_IP}) extensively studied in the literature. The upper bounds for $\IG(\AA,\bb,\cc)$ appear already in the work of Blair and Jeroslow \cite{B_and_J_value_1,B_and_J_value}. To state the best currently known estimates, we will need to introduce the following notation. By $\|\cdot\|_1$ and $\|\cdot\|_\infty$ we denote the $\ell_1$ and $\ell_\infty$
norms, respectively. Without loss of generality, we assume that $\AA$ has rank $m$ and, for $1\le r\le m$, denote by $\Delta_r(\AA)$ the maximum absolute $r\times r$ subdeterminant of $A$, that is 
\bea
\Delta_r(\AA)=\max\{|\det \BB|: \BB \mbox{ is a }r\times r\mbox{ submatrix of }\AA\}\,.
\eea
The sensitivity theorems of Cook et al. 
\cite{MR839604} (see also Celaya et al. \cite{CKPW} for further improvements) imply the bound
\be\label{IG_C}
\IG(\AA,\bb,\cc)\le \|{\ve c}\|_1 (n-m) \Delta_m(\AA)\,.
\ee
More recently, Eisenbrand and Weismantel \cite{MR3775840} obtained the estimate
\be\label{IG_EW}
\IG(\AA,\bb,\cc)\le \|{\ve c}\|_\infty m (2m\Delta_1(\AA)+1)^m\,,
\ee
which is, remarkably, independent of the dimension $n$. 

%Assuming that $\KP(\AA,\bb)$ contains integer points, 
Given a set $K\subset \R^n$ we will denote by $\conv(K)$ the convex hull of $K$. The polyhedron 
\bea
\KP_I(\AA,\bb)=\conv(\KP(\AA,\bb)\cap \Z^n)\,
\eea
is traditionally referred to as the {\em integer hull} of $\KP(\AA,\bb)$. Clearly, the problem (\ref{initial_IP}) has at least one optimal solution which is a vertex of the integer hull $\KP_I(\AA,\bb)$. 
%We obtain transference bounds that apply to the vertices of $\KP_I(\AA,\bb)$ that correspond to optimal solutions of  (\ref{initial_IP}). 
Given such a solution $\ve z^*$, we obtain transference bounds that link the integrality gap $\IG(\AA,\bb,\cc)$ with the size of the support of  $\ve z^*$. 

Observe  that the integrality gap is positive homogeneous of degree one in ${\ve c}$, that is for $t>0$
\be\label{homogenity}
\IG(\AA,\bb,t\cc)=t\,\IG(\AA,\bb,\cc)\,.
\ee
In what follows, we also use the notation $\|\cdot\|_2$ for the $\ell_2$ norm.
In view of \eqref{homogenity}, we may assume without loss of generality that ${\ve c}$ is a unit vector, that is $\|{\ve c}\|_2=1$.

Given a vector ${\ve x}=(x_1,\ldots, x_n)^\top\in \R^n$, we will denote by $\supp({\ve x})$ the {\em support} of ${\ve x}$, that is $\supp({\ve x})=\{i: x_i\neq 0\}$. To measure the size of the support, we use the {\em $0$-norm} $\|{\ve x}\|_0=|\supp({\ve x})|$. The first result makes use of the quantity
\bea
\Delta(\AA)=\sqrt{\det \AA\AA^\top}\,.
\eea
Geometrically, $\Delta(\AA)$ is the $m$-dimensional volume of the parallelepiped determined by the rows of $\AA$.
We will also denote by $\gcd(\AA)$ the greatest common divisor of all $m\times m$ subdeterminants of $\AA$. 

\begin{thm} \label{thm_transference_gap_integer_hull} Let $\AA\in \Z^{m\times n}$ be a matrix of rank $m$, $\bb\in \Z^m$ and $\cc\in \R^n$ be a unit cost vector. Suppose that (\ref{initial_IP}) is feasible and bounded. Let $\zz^*$ be an optimal solution to (\ref{initial_IP}) which is a vertex of $\KP_I(\AA,\bb)$. Then 
 \be\label{Gap_transference_integer_hull} 
\IG(\AA,\bb,\cc)\le \frac{s}{2^{s-m-1}}\cdot\frac{\Delta(\AA)}{\gcd(\AA)}\,,
\ee
 where $s=\| {\ve z}^*\|_0$.
\end{thm}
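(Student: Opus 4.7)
The plan is to recast the integrality gap as an LP maximum and bound it using tools from the geometry of numbers. Substituting $\xx = \zz^{*} - \hh$ in the LP relaxation gives
\[
\IG(\AA,\bb,\cc) = \max\{\cc\cdot\hh : \AA\hh = \mathbf{0},\ \hh \le \zz^{*}\},
\]
so the task reduces to upper bounding this LP maximum. The maximum is attained at a vertex $\hh^{*}$ of the feasible polyhedron, at which $n-m$ of the constraints $h_{i}\le z^{*}_{i}$ are tight; for tight indices $i\notin S=\supp(\zz^{*})$ this forces $h^{*}_{i}=0$, while for tight indices $i\in S$ one has $h^{*}_{i}=z^{*}_{i}$.

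The key structural input is that $\zz^{*}$ is a vertex of $\KP_{I}(\AA,\bb)$. If $\hh\in\ker(\AA)\cap\Z^{n}$ were nonzero with $|\hh|\le\zz^{*}$ componentwise, then $\zz^{*}\pm\hh$ would be two distinct integer feasible points of $\KP(\AA,\bb)$ whose midpoint is $\zz^{*}$, contradicting its vertex status. Since $z^{*}_{i}=0$ outside $S$, this forces the centrally symmetric box $B_{S}=\prod_{i\in S}[-z^{*}_{i},z^{*}_{i}]$ intersected with $V_{S}=\ker(\AA_{S})\subseteq\R^{S}$ to contain no nonzero point of the lattice $\Lambda_{S}=\ker(\AA_{S})\cap\Z^{S}$. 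Minkowski's first theorem applied in the $(s-m)$-dimensional subspace $V_{S}$ then yields
\[
\vol_{s-m}(B_{S}\cap V_{S}) \le 2^{s-m}\det(\Lambda_{S}) \le 2^{s-m}\frac{\Delta(\AA)}{\gcd(\AA)},
\]
where the last inequality combines $\det(\Lambda_{S})=\Delta(\AA_{S})/\gcd(\AA_{S})$ with $\Delta(\AA_{S})\le\Delta(\AA)$ (Cauchy--Binet) and $\gcd(\AA)\mid\gcd(\AA_{S})$.

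The final step, which I expect to be the main obstacle, is to convert this volume upper bound into a bound on $\cc\cdot\hh^{*}$. The plan is to pair Minkowski with a box-slicing inequality: after the diagonal change of variables $T=\diag(z^{*}_{i})_{i\in S}$, Vaaler's theorem provides the matching lower bound $\vol_{s-m}(B_{S}\cap V_{S})\ge 2^{s-m}\sqrt{\det(U^{\top}T^{2}U)}$ for any orthonormal basis $U$ of $T^{-1}V_{S}$. Combining the two estimates yields $\sqrt{\det(U^{\top}T^{2}U)}\le \Delta(\AA)/\gcd(\AA)$, a weighted-determinantal inequality on the entries of $\zz^{*}$. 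Expressing $\hh^{*}$ via a nonsingular $m\times m$ submatrix $\AA_{J}$ of $\AA$ and summing coordinate-wise contributions over the $s$ indices of $S$ should produce the claimed bound $s/2^{s-m-1}\cdot\Delta(\AA)/\gcd(\AA)$. The hardest part is precisely this quantitative linkage: the linear factor $s$ suggests an index-by-index argument, whereas the power $2^{s-m-1}$ will have to come directly from the Minkowski--Vaaler pair applied to the slice $B_{S}\cap V_{S}$, and making both match up requires a delicate choice of the basis $J$ and a careful coordinate analysis rather than a coordinate-free volume estimate alone.
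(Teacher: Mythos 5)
There is a genuine gap in the proposal, and you have correctly located where it lies (``The hardest part is precisely this quantitative linkage''), but the gap is structural rather than a computation to be finished: the lattice-free body $B_{S}\cap V_{S}$ does not involve the cost vector $\cc$ in any way, so the Vaaler--Minkowski pair applied to it can only produce a statement about the coordinates of $\zz^{*}$, never a bound on $\IG(\AA,\bb,\cc)=\max\{\cc\cdot\hh:\AA\hh=\ve 0,\ \hh\le\zz^{*}\}$. Concretely, your two estimates combine to give $\prod_{i} z^{*}_{(i)}\le \Delta(\AA)/\gcd(\AA)$ (over the $s-m$ smallest coordinates in $S$), which is a constraint on $\zz^{*}$ alone. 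No matter how one chooses the basis $J$ or sums ``coordinate-wise contributions,'' one cannot recover an objective bound from a $\cc$-free inequality; the same vertex $\zz^{*}$ supports arbitrarily many cost vectors $\cc$, and the proposed inequality cannot distinguish among them. There are also two smaller issues that would have to be repaired even if this worked: $\det(\Lambda_{S})=\Delta(\AA_{S})/\gcd(\AA_{S})$ requires $\AA_{S}$ to have full row rank $m$, which need not hold; and the closed box $\prod_{i\in S}[-z^{*}_{i},z^{*}_{i}]$ only gives $\prod z^{*}_{(i)}\ge 1$, which cannot yield the factor $2^{s-m-1}$ --- one must use the half-open box of widths $z^{*}_{i}+1$ so that integrality of lattice vectors keeps the lattice-free property while each factor is at least $2$.

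The paper resolves exactly this obstacle by a construction that builds $\cc$ into the body from the start. It forms the augmented matrix $\BB$ by appending $\cc^{\top}$ as an extra row to $\AA$, takes the $(n-m-1)$-dimensional box section $K=\ker(\BB)\cap\prod_{i}(-d_{i},d_{i})$ with $d_{i}=z^{*}_{i}+1$, and then builds the origin-symmetric bi-pyramid $L=\conv(\xx^{*}-\zz^{*},\,K,\,\zz^{*}-\xx^{*})$ inside $\ker(\AA)$. The volume of $L$ is proportional to the gap $\cc\cdot(\zz^{*}-\xx^{*})$ via the height-times-base formula, which is precisely where the linear factor $n-m$ (and ultimately $s$) comes from; the base volume is lower bounded by the same Vaaler-type inequality you invoke (Corollary~\ref{coro_section_volume_via_B}). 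Assuming the gap is too large makes $\vol(L)$ exceed $2^{n-m}\det(\Lambda(\AA))$, and Minkowski produces a nonzero lattice point $\yy\in L$. The contradiction then splits on $\cc\cdot\yy$: if $\cc\cdot\yy=0$ then $\yy\in K$ and $\zz^{*}\pm\yy$ violate that $\zz^{*}$ is a vertex of the integer hull (this is the same midpoint observation you made); if $\cc\cdot\yy\neq 0$, then one of $\zz^{*}\pm\yy$ is a feasible integer point with strictly smaller objective, violating optimality. Theorem~\ref{thm_transference_gap_integer_hull} is then deduced from this Theorem~\ref{thm_gap_integer_hull} by restricting to the support of $\xx^{*}$ and $\zz^{*}$ and carefully re-counting ranks. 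The essential idea you are missing is to slice by $\ker(\cc)$ and use the segment $\pm(\xx^{*}-\zz^{*})$ to pick up the objective direction; without that, the geometry of numbers has no way to see $\cc$.
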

Hence, we obtain an upper bound for the integrality gap that drops {\em exponentially} in the size of support of any optimal solution to (\ref{initial_IP}) which is a vertex of the integer hull $\KP_I(\AA,\bb)$.  We remark that  \cite{ACHW} gives optimal transference bounds for positive knapsacks and corner polyhedra that connect the $\ell_\infty$-distance proximity and the size of the support of integer feasible solutions. Theorem \ref{thm_transference_gap_integer_hull} applies in the general case and a different setting; it connects the integrality gap and the size of the support of optimal solutions to \eqref{initial_IP}.

Next, we obtain from Theorem \ref{thm_transference_gap_integer_hull}  transference bounds in terms of the maximum absolute $m\times m$ subdeterminant $\Delta_m(\AA)$ and the maximum absolute entry $\Delta_1(\AA)$.
\begin{cor}
\label{cor_gap_immediate} Assume the conditions of Theorem~\ref{thm_transference_gap_integer_hull}.  Then
the bounds
\begin{equation}
\IG(\AA,\bb,\cc)\le \frac{\; s\binom{s+m}{m}^{1/2}}{2^{s-m-1}}\cdot\frac{\Delta_m(\AA)}{\gcd(\AA)}\label{eq:Gap_transference_Delta_m}
\end{equation}
and 
\begin{equation}
\IG(\AA,\bb,\cc)\le \frac{\; s(s+m)^{m/2}}{2^{s-m-1}}\cdot\frac{\,(\Delta_1(\AA))^m}{\gcd(\AA)}\label{eq:Gap_transference_Delta_1}
\end{equation}
hold.
\end{cor}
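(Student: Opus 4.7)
The approach is to reduce Theorem~\ref{thm_transference_gap_integer_hull} to a submatrix of $\AA$ with at most $s+m$ columns, and then estimate $\Delta(\cdot)$ on that submatrix in two different ways.

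Let $B\subseteq[n]$ be an optimal basis of the LP relaxation \eqref{initial_LP}: $|B|=m$, the columns of $\AA$ indexed by $B$ are linearly independent, and the corresponding basic feasible solution $\xx^{\LP}$ attains $\LP(\AA,\bb,\cc)$. Set $S=\supp(\zz^*)\cup B$, so $|S|\le s+m$, denote by $\AA_S$ the $m\times|S|$ submatrix of $\AA$ on the columns in $S$, and let $\cc_S$ be the restriction of $\cc$ to $S$. Since $B\subseteq S$, the matrix $\AA_S$ has rank $m$; since $\supp(\xx^{\LP})\cup\supp(\zz^*)\subseteq S$, both the IP and LP optima for the restricted data $(\AA_S,\bb,\cc_S)$ coincide with those of $(\AA,\bb,\cc)$; and $\zz^*|_S$ remains a vertex of $\KP_I(\AA_S,\bb)$ with support size $s$, since any nontrivial convex decomposition of $\zz^*|_S$ into integer points of $\KP(\AA_S,\bb)$ would zero-pad to such a decomposition of $\zz^*$. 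As $\|\cc_S\|_2\le\|\cc\|_2=1$, Theorem~\ref{thm_transference_gap_integer_hull} applies to $(\AA_S,\bb,\cc_S)$ and yields
\begin{equation*}
\IG(\AA,\bb,\cc)=\IG(\AA_S,\bb,\cc_S)\le\frac{s}{2^{s-m-1}}\cdot\frac{\Delta(\AA_S)}{\gcd(\AA_S)}.
\end{equation*}
Every $m\times m$ subdeterminant of $\AA_S$ is also an $m\times m$ subdeterminant of $\AA$, so $\gcd(\AA)\mid\gcd(\AA_S)$ and we may replace the denominator by $\gcd(\AA)$ without invalidating the inequality.

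It remains to bound $\Delta(\AA_S)$ in two ways. For \eqref{eq:Gap_transference_Delta_m}, the Cauchy--Binet formula gives
\[
\Delta(\AA_S)^2=\sum_{I}(\det\AA_I)^2\le\binom{|S|}{m}\Delta_m(\AA)^2\le\binom{s+m}{m}\Delta_m(\AA)^2,
\]
where $I$ ranges over $m$-subsets of $S$. For \eqref{eq:Gap_transference_Delta_1}, Hadamard's inequality applied to the $m$ rows of $\AA_S$, each of Euclidean norm at most $\sqrt{|S|}\,\Delta_1(\AA)$, yields
\[
\Delta(\AA_S)\le|S|^{m/2}\Delta_1(\AA)^m\le(s+m)^{m/2}\Delta_1(\AA)^m.
\]
Substituting these two estimates into the displayed inequality above produces \eqref{eq:Gap_transference_Delta_m} and \eqref{eq:Gap_transference_Delta_1} respectively. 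The only step requiring genuine care is the reduction to $\AA_S$, specifically the verification that $\zz^*|_S$ remains a vertex of the restricted integer hull and that the IP and LP values are unchanged; once this is in place, the rest is a routine combination of two classical matrix inequalities with the monotonicity $\gcd(\AA)\mid\gcd(\AA_S)$.
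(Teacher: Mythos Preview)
Your argument is correct. Both your proof and the paper's follow the same overall plan---reduce to a submatrix with at most $s+m$ columns, then bound $\Delta(\cdot)$ on that submatrix via Cauchy--Binet (for \eqref{eq:Gap_transference_Delta_m}) and Hadamard (for \eqref{eq:Gap_transference_Delta_1})---but the reduction step differs. The paper does not re-invoke Theorem~\ref{thm_transference_gap_integer_hull} as a black box; instead it reaches back into that theorem's proof and uses the intermediate inequality $\IG(\AA,\bb,\cc)\le \dfrac{s}{2^{s-m-1}}\,\Delta(\AAhat_I)$, where $\AAhat_I$ is the full-row-rank $\hat m\times\hat n$ matrix built there, then enlarges $I$ by a set $J$ to restore rank $m$ and bounds $\Delta(\AAhat_I)\le \Delta(\AA_{I\cup J})/\gcd(\AA)$ with $|I\cup J|\le s+m$. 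Your route---restrict to $S=\supp(\zz^*)\cup B$ for an optimal LP basis $B$, check that optimality and the vertex property survive, and then apply Theorem~\ref{thm_transference_gap_integer_hull} directly---is more modular and arguably cleaner, since it treats the theorem as a finished result rather than mining its proof. The paper's detour through $\AAhat_I$ is an artefact of how Theorem~\ref{thm_transference_gap_integer_hull} itself was established; your approach shows the corollary follows from the \emph{statement} alone.

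One small point of presentation: Theorem~\ref{thm_transference_gap_integer_hull} is stated for a \emph{unit} cost vector, so strictly speaking you should apply it to $\cc_S/\|\cc_S\|_2$ (when $\cc_S\ne\0$) and then use the homogeneity \eqref{homogenity} together with $\|\cc_S\|_2\le 1$ to absorb the scaling; your sentence ``As $\|\cc_S\|_2\le 1$, Theorem~\ref{thm_transference_gap_integer_hull} applies'' elides this. Likewise, the degenerate case $|S|=m$ (forcing $\zz^*=\xx^{\LP}$ and $\IG=0$) deserves a word. Neither affects the validity of the argument.
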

For a unit cost vector ${\ve c}$, the bound \eqref{eq:Gap_transference_Delta_m} improves on \eqref{IG_C} when $s\geq 4m$, and the bound \eqref{eq:Gap_transference_Delta_1} improves on \eqref{IG_EW} when $s \geq 6m$.

The proof of Theorem~\ref{thm_transference_gap_integer_hull} makes use of results from convex geometry and  Minkowski's geometry of numbers. Following a similar approach, we obtain a new upper bound for the $\ell_2$-distance from 
a vertex of the polyhedron $\KP(\AA, {\ve b})$ to its nearest integer point in $\KP(\AA, {\ve b})$. 

\begin{thm}
\label{thm:UpperBound}
Let $\AA \in \Z^{m\times n}$, $m<n$, be a matrix of rank $m$, $\ve b \in \Z^m $, and suppose that $ \KP(\AA, {\ve b})$ contains integer points.  Let ${\ve x}^*$ be a vertex of  $ \KP(\AA, {\ve b})$. There
exists an integer point ${\ve z}^*\in \KP(\AA, {\ve b})$ such that
\begin{equation}\label{t:Bound_via_pencils}
\|{\ve x}^*-{\ve z}^*\|_2\le  \frac{\Delta(\AA)}{\gcd(\AA)}-1\,.
\end{equation}
\end{thm}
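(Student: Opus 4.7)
The plan is to translate the proximity bound into a lattice-point question for the kernel lattice of $\AA$ and then apply Minkowski's geometry of numbers, following the same template as Theorem~\ref{thm_transference_gap_integer_hull}.

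First, introduce the kernel lattice $\Lambda:=\{\ve y\in\Z^n:\AA\ve y=\ve 0\}$, a rank-$(n-m)$ sublattice of $\Z^n$ contained in $\ker(\AA)\subset\R^n$. A standard Smith-normal-form calculation identifies its $(n-m)$-dimensional covolume inside $\ker(\AA)$ as $\det\Lambda=\Delta(\AA)/\gcd(\AA)$. Since $\KP(\AA,\ve b)$ contains integer points, fix $\ve z_0\in\KP(\AA,\ve b)\cap\Z^n$; then the integer solutions to $\AA\ve x=\ve b$ form the coset $\ve z_0+\Lambda$, so producing $\ve z^*$ amounts to finding $\ve y\in\Lambda$ with $\ve z_0+\ve y\ge\ve 0$ and $\|\ve z_0+\ve y-\ve x^*\|_2\le\Delta(\AA)/\gcd(\AA)-1$.

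Second, exploit the vertex structure at $\ve x^*$. Let $B$ be the basis of $\ve x^*$, so $|B|=m$, $\ve x^*_B=\AA_B^{-1}\ve b\ge\ve 0$, and $\ve x^*_N=\ve 0$ where $N=\{1,\ldots,n\}\setminus B$. Every $\ve u\in\ker(\AA)$ is uniquely of the form $\ve u=(-\AA_B^{-1}\AA_N\ve v,\ve v)$ with $\ve v\in\R^{n-m}$, and a direct calculation shows $\|\ve u\|_2^2=\ve v^\top M\ve v$ for $M:=I+(\AA_B^{-1}\AA_N)^\top \AA_B^{-1}\AA_N$, so the Euclidean norm on $\ker(\AA)$ corresponds to the ellipsoidal norm $\|\ve v\|_M$ on $\R^{n-m}$ via the projection $\pi_N$ onto the non-basic coordinates. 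A Cauchy--Binet identity furnishes the key relation $\sqrt{\det M}=\Delta(\AA)/|\det\AA_B|\ge 1$, and the projected lattice $\pi_N(\Lambda)\subseteq\Z^{n-m}$ has covolume $|\det\AA_B|/\gcd(\AA)$. The problem reduces to finding a representative $\ve v$ of the coset $\pi_N(\ve z_0)+\pi_N(\Lambda)$ with $\ve v\ge\ve 0$, $\AA_B^{-1}(\ve b-\AA_N\ve v)\ge\ve 0$, and $\|\ve v\|_M\le\Delta(\AA)/\gcd(\AA)-1$.

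Third, apply a Minkowski-type lattice-point argument to an ellipsoidal body in $\ker(\AA)$ adapted to the tangent cone of $\KP(\AA,\ve b)$ at $\ve x^*$; together with the box-slicing inequalities alluded to in the abstract, this should produce a short coset representative with $\|\ve v\|_M$ at most the projected covolume times $\sqrt{\det M}$, which equals $\Delta(\AA)/\gcd(\AA)$. The extra $-1$ on the right-hand side emerges via a careful Hermite-normal-form analysis of the coset representative in the non-negative orthant of $\Z^{n-m}$ (whose coordinates can be placed in boxes of sides $h_i-1$ with $\prod h_i = |\det\AA_B|/\gcd(\AA)$), combined with the unit of slack $\sqrt{\det M}\ge 1$. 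The main obstacle is the interaction between the central symmetry required by Minkowski and the one-sided feasibility constraint $\ve z_0+\ve y\ge\ve 0$: the argument must either center the Minkowski body at a suitable shift of $\ve x^*$ so that basic coordinates remain non-negative, or replace the symmetric argument with an asymmetric Blichfeldt-type pigeonhole. The vertex hypothesis on $\ve x^*$ is essential here to ensure that the tangent cone at $\ve x^*$ is pointed and thus has enough room to contain the short lattice representative furnished by the Minkowski step.
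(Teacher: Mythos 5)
Your setup is correct and overlaps with the paper's: you identify the kernel lattice $\Lambda(\AA)$, its covolume $\Delta(\AA)/\gcd(\AA)$, and the idea of applying Minkowski's first theorem inside $\ker(\AA)$, with Vaaler-type cube slicing to control volumes. But from the third step onward the proposal is a sketch with a genuine gap, and you flag the gap yourself: Minkowski requires a centrally symmetric body, whereas the feasibility constraint $\ve z^*\ge\ve 0$ is one-sided. You propose resolving this via a Hermite-normal-form box for a coset representative of $\pi_N(\Lambda)$ in the ellipsoidal norm $\|\cdot\|_M$, but this does not close the argument. A HNF representative with $0\le v_i<h_i$ has no control on $\|\ve v\|_M$ beyond a term like $\sqrt{\sum(h_i-1)^2}$ in the \emph{Euclidean} norm; $\sqrt{\det M}$ is a geometric mean of eigenvalues and does not bound the quadratic form evaluated at a coordinate box. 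Nor does this construction guarantee the basic coordinates $\AA_B^{-1}(\ve b-\AA_N\ve v)$ stay non-negative. Finally, the claimed $-1$ ``via a careful HNF analysis'' is exactly the part that is not established: nothing in your outline produces the additive $1$ in the volume bound.

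The paper's proof uses two ideas you do not have. First, it picks $\ve z^*$ to \emph{minimize} $\|\ve z^*_N\|_2$ over integer points of $\KP(\AA,\ve b)$; the minimality is what eventually yields the contradiction. Second, it constructs the origin-symmetric body $E=\conv\bigl(C^n(\ve x^*-\ve z^*),\,C^n(\ve z^*-\ve x^*)\bigr)$, which decomposes as $\bigl(D(\ve x^*,\ve z^*)-\ve z^*\bigr)\cup\bigl(-D(\ve x^*,\ve z^*)+\ve z^*\bigr)$ with $D(\ve u,\ve v)=\conv(C^n(\ve u),C^n(\ve v))$. This simultaneously supplies (i) central symmetry for Minkowski, (ii) a volume lower bound $\vol_{n-m}(E\cap\ker\AA)\ge 2^{n-m}\bigl(1+\|\ve x^*-\ve z^*\|_2\bigr)$, proved by splitting $E\cap\ker\AA$ into a full cube slice plus a cylinder over a codimension-one slice and applying Vaaler twice (this is where the $+1$, hence the $-1$ in the final bound, comes from), and (iii) via a short lemma, the fact that any integer point of $D(\ve u,\ve v)$ with $\ve u,\ve v\ge\ve 0$ is automatically in $\Z^n_{\ge 0}$, which handles both basic and non-basic non-negativity at once. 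The contradiction then comes from observing that the resulting lattice translate $\ve w\in D(\ve x^*,\ve z^*)$ has $w_j\le z^*_j$ for all $j\in N$ (because $\ve x^*_N=\ve 0$), and strictly less in at least one coordinate, violating minimality of $\|\ve z^*_N\|_2$. Without some version of this decomposition and the minimality trick, your plan as written cannot reach the stated bound.
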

We remark that for certain matrices $\AA$ the bound \eqref{t:Bound_via_pencils} is smaller than the $\ell_2$-norm proximity bounds $\sqrt{n}(n-m)\Delta_m(\AA)$  and $m (2m\Delta_1(\AA)+1)^m$ that can be derived from \cite{CKPW} and \cite{MR3775840}, respectively.  It is sufficient to observe that the ratio $\Delta(\AA)/\Delta_m(\AA)$ can be arbitrarily close to one and the ratio  $\Delta(\AA)/\Delta_1(\AA)$ can be arbitrarily small.

Applying Theorem \ref{thm:UpperBound} to a vertex optimal solution ${\ve x}^*$ of \eqref{initial_LP} we obtain the following estimate.

\begin{cor}
\label{cor_gap_via_length} Let $\AA\in \Z^{m\times n}$, $m<n$, be a matrix of rank $m$, $\bb\in \Z^m$ and $\cc\in \R^n$ be a unit cost vector. Suppose that (\ref{initial_IP}) is feasible and bounded. Then the bound 
\be\label{Gap_zonotope} 
\IG(\AA,\bb,\cc)\le\frac{\Delta(\AA)}{\gcd(\AA)}-1
\ee
holds.
\end{cor}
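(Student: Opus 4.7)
The plan is short: specialise Theorem~\ref{thm:UpperBound} to a vertex optimal solution of the LP relaxation \eqref{initial_LP} and then use Cauchy--Schwarz to convert the $\ell_2$-proximity bound into an integrality gap bound.

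First I would observe that, since \eqref{initial_IP} is feasible and bounded, the same holds for the LP \eqref{initial_LP}. The feasible polyhedron $\KP(\AA,\bb)$ lies in the nonnegative orthant and is therefore pointed, so the LP attains its optimum at some vertex $\xx^*$ of $\KP(\AA,\bb)$; in particular $\cc\cdot \xx^*=\LP(\AA,\bb,\cc)$. Applying Theorem~\ref{thm:UpperBound} to this vertex produces an integer point $\zz^*\in \KP(\AA,\bb)$ satisfying
\[
\|\xx^*-\zz^*\|_2 \le \frac{\Delta(\AA)}{\gcd(\AA)}-1.
\]

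Since $\zz^*$ is integer feasible, $\IP(\AA,\bb,\cc)\le \cc\cdot \zz^*$. Writing $\cc\cdot \zz^*=\cc\cdot \xx^*+\cc\cdot(\zz^*-\xx^*)$ and applying Cauchy--Schwarz together with the normalisation $\|\cc\|_2=1$ yields
\[
\IG(\AA,\bb,\cc)=\IP(\AA,\bb,\cc)-\LP(\AA,\bb,\cc)\le \cc\cdot(\zz^*-\xx^*)\le \|\zz^*-\xx^*\|_2,
\]
which combined with the above bound gives \eqref{Gap_zonotope}. There is essentially no obstacle; the only point worth being explicit about is that Theorem~\ref{thm:UpperBound} requires its input to be a vertex of $\KP(\AA,\bb)$, which is why we need to invoke the standard fact that a bounded LP over a pointed polyhedron has a vertex optimum.
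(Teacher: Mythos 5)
Your proof is correct and follows essentially the same route as the paper, which simply states that the corollary is obtained by applying Theorem~\ref{thm:UpperBound} to a vertex optimal solution of \eqref{initial_LP}. You have merely made explicit the two small points the paper leaves implicit, namely the existence of a vertex LP optimum (since $\KP(\AA,\bb)$ is pointed) and the Cauchy--Schwarz step using $\|\cc\|_2=1$.
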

For a unit cost vector ${\ve c}$, the bound \eqref{Gap_zonotope} improves   \eqref{IG_C} when $\Delta(\AA)/\gcd(\AA)<(n-m)\Delta_m(\AA)+1$ and improves  \eqref{IG_EW}  when $\Delta(\AA)/\gcd(\AA)<mn^{-1/2} (2m\Delta_1(\AA)+1)^m+1$.

\section{Volumes and linear transforms}

In this section, we develop geometric tools needed for the proof of the transference bounds. 

For a matrix $\AA\in \R^{l\times r}$ we denote by $\lin(\AA)$ the linear subspace of $\R^l$ spanned by the columns of $A$. Given a set $M\subset \R^r$ we let $\AA M=\{\AA{\ve x}\in \R^l: {\ve x}\in M\}$ and use the notation $[\AA]=\AA[0,1]^r$. For a set $X\subset \R^l$ and a linear subspace $L$ of $\R^l$, we denote by $X|L$ the orthogonal projection of $X$ onto $L$. Further,  $\vol_i(\cdot)$  denotes the $i$-dimensional volume.

Let $S$ be an $(l-k)$-dimensional subspace of $\R^l$. Consider an orthonormal basis ${\ve s}_1, \ldots, {\ve s}_{l-k}, {\ve s}_{l-k+1}, \ldots, {\ve s}_l$ of $\R^l$ such that the first $l-k$ vectors form a basis of $S$. Let further $\SS_{l-k}=({\ve s}_1, \ldots, {\ve s}_{l-k})\in \R^{l\times (l-k)}$ and $\SS_{k}=({\ve s}_{l-k+1}, \ldots, {\ve s}_{l})\in \R^{l\times k}$. 

Given a measurable set  $M$ in the subspace $S$, we are interested in the $(l-k)$-dimensional volume of its image 
$\DD M$, where we assume that $\DD\in \R^{l\times l}$ is an invertible matrix. The first result gives a general expression for $\vol_{l-k}(\DD M)$ in terms of $\vol_{l-k}(M)$.

\begin{lemma} \label{section_volume_regular} Let $S$ be an $(l-k)$-dimensional subspace of $\R^l$. Let $M\subset S$ be measurable, $\DD\in \R^{l\times l}$ be nonsingular
and let the rows of $\BB\in \R^{k\times l}$ form a basis of the subspace $(\DD S)^\bot$, the orthogonal complement of the subspace $\DD S=\lin(\DD \SS_{l-k})$. Then
\bea
\vol_{l-k}(\DD M)= |\det(\DD)| \sqrt{\frac{\det(\BB \BB^\top)}{\det(\BB\DD\DD^\top \BB^\top)}}\vol_{l-k}(M)\,.
\eea
\end{lemma}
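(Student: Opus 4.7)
The plan is to reduce the stated volume formula to a purely determinantal identity and then prove that identity by a Schur-complement computation that crucially exploits the orthogonality hypothesis $\BB(\DD S)=0$.

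First I would parameterize $M$ by the orthonormal columns of $\SS_{l-k}$. Writing $M = \SS_{l-k}N$ with $N = \SS_{l-k}^\top M\subset\R^{l-k}$, one has $\vol_{l-k}(M) = \vol_{l-k}(N)$ since $\SS_{l-k}$ is an isometric embedding. Setting $T_{l-k}:=\DD\SS_{l-k}$, the standard Gram-determinant formula for images of injective linear maps gives
$$\vol_{l-k}(\DD M) = \sqrt{\det(T_{l-k}^\top T_{l-k})}\,\vol_{l-k}(N).$$
So the lemma is equivalent to the algebraic identity
$$\det(T_{l-k}^\top T_{l-k}) = (\det\DD)^2\,\frac{\det(\BB\BB^\top)}{\det(\BB\DD\DD^\top\BB^\top)}.$$

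To establish this, I would complete $\SS_{l-k}$ to the orthogonal matrix $\SS=[\SS_{l-k},\SS_k]$ and set $T=\DD\SS$, so that $(\det T)^2 = (\det\DD)^2$. Writing $T_k:=\DD\SS_k$, the product $T^\top T$ has the block form
$$T^\top T = \begin{pmatrix} T_{l-k}^\top T_{l-k} & T_{l-k}^\top T_k \\ T_k^\top T_{l-k} & T_k^\top T_k \end{pmatrix},$$
and the Schur-complement identity yields
$$(\det\DD)^2 = \det(T^\top T) = \det(T_{l-k}^\top T_{l-k})\,\det(T_k^\top P^\bot T_k),$$
where $P^\bot = I - T_{l-k}(T_{l-k}^\top T_{l-k})^{-1} T_{l-k}^\top$ is the orthogonal projection onto $(\DD S)^\bot$.

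The key step, and the only one that uses the hypothesis on $\BB$, is to rewrite $\det(T_k^\top P^\bot T_k)$ using $\BB$. Since the rows of $\BB$ form a basis of $(\DD S)^\bot$, one has $P^\bot = \BB^\top(\BB\BB^\top)^{-1}\BB$, and moreover $\BB T_{l-k} = \BB\DD\SS_{l-k}=0$. Therefore $\BB T = [0,\BB T_k]$ and $\BB\DD\DD^\top\BB^\top = \BB TT^\top\BB^\top = (\BB T_k)(\BB T_k)^\top$, which gives
$$\det(T_k^\top P^\bot T_k) = \frac{(\det(\BB T_k))^2}{\det(\BB\BB^\top)} = \frac{\det(\BB\DD\DD^\top\BB^\top)}{\det(\BB\BB^\top)}.$$
Substituting this into the Schur-complement identity and taking square roots yields the claim. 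I do not anticipate any substantial obstacle; the one point that requires care is to apply the Schur complement in the direction that exposes the projection onto $(\DD S)^\bot$, which is precisely what lets the orthogonality hypothesis on $\BB$ close the computation.
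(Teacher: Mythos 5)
Your proof is correct and is essentially the same argument as the paper's, recast in algebraic rather than geometric language: the paper factors $|\det\DD| = \vol_{l-k}([\DD\SS_{l-k}])\cdot\vol_k([\DD\SS_k]\,|\,\lin(\BB^\top))$ geometrically, whereas you factor $(\det\DD)^2 = \det(T^\top T)$ by Schur complement, which is the determinantal form of the identical base-times-projected-height decomposition; both proofs then close by writing the projection via $\BB^\top(\BB\BB^\top)^{-1}\BB$ and using $\BB\DD\SS_{l-k}=0$.
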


\begin{proof}

By the elementary properties of volume, we have
\be\label{volume_as_product}
\vol_{l-k}(\DD M)=\vol_{l-k}([\DD \SS_{l-k}])\vol_{l-k}(M)\,.
\ee
On the other hand, we have 
\be\label{det_expression}
\begin{aligned}
|\det(\DD)| & =\vol_l([\DD (\SS_{l-k}, \SS_k)])\\
& = \vol_{l-k}([\DD \SS_{l-k}])\vol_k([\DD\SS_k]| \lin(\DD\SS_{l-k})^\bot)\\
& = \vol_{l-k}([\DD \SS_{l-k}])\vol_k([\DD\SS_k]| \lin(\BB^\top))\,.
\end{aligned}
\ee
Now, according to the definition of projections, we have
\bea
[\DD\SS_k]|\lin(\BB^\top)=\BB^\top (\BB \BB^\top)^{-1}\BB[\DD\SS_k]
\eea
and hence
\bea
\begin{aligned}
\vol_k([\DD\SS_k]|\lin(\BB^\top))&=\sqrt{ \det(\SS_k^\top \DD^\top\BB^\top(\BB\BB^\top)^{-1}\BB\BB^\top(\BB\BB^\top)^{-1}\BB\DD\SS_k) }\\
&=\sqrt{\det((\BB\BB^\top)^{-1})}\sqrt{\det(\BB\DD\SS_k\SS_k^\top\DD^\top\BB^\top)}\\
&=\sqrt{\frac{\det(\BB\DD\DD^\top\BB^\top)}{\det(\BB\BB^\top)}}\,.
\end{aligned}
\eea
Substituting this expression in \eqref{det_expression} gives along \eqref{volume_as_product}
the identity.
\qed
\end{proof}

The second result provides a lower bound for $\vol_{l-k}(\DD M)$ that involves the eigenvalues of the matrix $\DD^\top \DD$ and $\vol_{l-k}(M)$.

\begin{lemma} \label{volume_via_eigenvalues}
Let $M\subset S$ be measurable, $\DD\in\R^{l\times l}$ nonsingular and let $\lambda_1\le \lambda_2\le \cdots \le \lambda_l$
be the positive eigenvalues of $\DD^\top \DD$. Then
\bea \vol_{l-k}(\DD M)\ge \left(\prod_{i=1}^{l-k} \sqrt{\lambda_i}\right) \vol_{l-k}(M)\,.\eea
\end{lemma}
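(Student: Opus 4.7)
The plan is to reduce the inequality to a determinantal estimate for the compressed matrix $\SS_{l-k}^\top \DD^\top \DD \SS_{l-k}$, where $\SS_{l-k}\in\R^{l\times(l-k)}$ collects an orthonormal basis of $S$ in its columns, and then to invoke a classical eigenvalue-interlacing result. First I would reuse the multiplicative identity already established at the start of the proof of Lemma~\ref{section_volume_regular},
$$\vol_{l-k}(\DD M) \;=\; \vol_{l-k}\bigl([\DD\,\SS_{l-k}]\bigr)\,\vol_{l-k}(M),$$
and observe that the $(l-k)$-dimensional volume of the parallelepiped spanned by the columns of $\DD\,\SS_{l-k}$ equals $\sqrt{\det\bigl(\SS_{l-k}^\top \DD^\top \DD\,\SS_{l-k}\bigr)}$. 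Hence it suffices to prove
$$\det\bigl(\SS_{l-k}^\top \DD^\top \DD\,\SS_{l-k}\bigr) \;\ge\; \prod_{i=1}^{l-k}\lambda_i.$$

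This is precisely the content of the Poincar\'e separation (Cauchy interlacing) theorem, an immediate consequence of the Courant--Fischer min-max characterisation of eigenvalues: for any symmetric positive semidefinite matrix $A\in\R^{l\times l}$ with eigenvalues $\lambda_1\le\cdots\le\lambda_l$ and any $P\in\R^{l\times r}$ with orthonormal columns, the ordered eigenvalues $\mu_1\le\cdots\le\mu_r$ of $P^\top A P$ satisfy $\lambda_i \le \mu_i \le \lambda_{l-r+i}$ for $i=1,\dots,r$. Applying only the lower bounds with $A=\DD^\top\DD$, $P=\SS_{l-k}$ and $r=l-k$ yields $\mu_i\ge\lambda_i$, so that multiplying over $i$ gives the desired determinantal inequality. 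Taking square roots and feeding the resulting lower bound on $\vol_{l-k}([\DD\,\SS_{l-k}])$ into the volume factorisation closes the argument.

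The only subtlety is to invoke the correct direction of the interlacing: only the lower bounds $\mu_i\ge\lambda_i$, which arise from the min-characterisation of eigenvalues over $i$-dimensional subspaces, are actually used. Apart from this, the proof is purely linear-algebraic and needs no further input beyond Lemma~\ref{section_volume_regular}. Geometrically the inequality is also natural: $\sqrt{\lambda_1},\dots,\sqrt{\lambda_l}$ are the singular values of $\DD$, and restricting $\DD$ to an arbitrary $(l-k)$-dimensional subspace can shrink the $(l-k)$-volume by no more than the product of the smallest $l-k$ singular values of $\DD$, with equality when $S$ is aligned with the corresponding eigenspaces of $\DD^\top\DD$.
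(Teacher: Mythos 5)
Your reduction is identical to the paper's: both arguments start from the factorization $\vol_{l-k}(\DD M)=\vol_{l-k}([\DD\SS_{l-k}])\,\vol_{l-k}(M)$, rewrite the parallelepiped volume as $\sqrt{\det(\SS_{l-k}^\top\DD^\top\DD\,\SS_{l-k})}$, and then aim at the determinantal bound $\det(\SS_{l-k}^\top\DD^\top\DD\,\SS_{l-k})\ge\prod_{i=1}^{l-k}\lambda_i$. The difference lies in how that last inequality is obtained. You cite the Poincar\'e separation theorem (the lower half of Cauchy interlacing): the ordered eigenvalues $\mu_i$ of the compression $\SS_{l-k}^\top\DD^\top\DD\,\SS_{l-k}$ satisfy $\mu_i\ge\lambda_i$, hence the product of the $\mu_i$ dominates the product of the $l-k$ smallest $\lambda_i$. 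The paper instead diagonalizes $\DD^\top\DD=\OO^\top\diag(\lambda_i)\OO$, writes $\widetilde{\SS_{l-k}}=\OO\SS_{l-k}$, and applies the Cauchy--Binet formula twice: once to expand $\det(\widetilde{\SS_{l-k}}^\top\diag(\lambda_i)\widetilde{\SS_{l-k}})$ as a sum over $(l-k)$-subsets $I$ of $\det(\diag(\lambda_i)^I)\,(\det\widetilde{\SS_{l-k}}^I)^2$, bound each coefficient $\det(\diag(\lambda_i)^I)\ge\prod_{i\le l-k}\lambda_i$, and once more to collapse the remaining sum $\sum_I(\det\widetilde{\SS_{l-k}}^I)^2$ back to $\det(\widetilde{\SS_{l-k}}^\top\widetilde{\SS_{l-k}})=1$. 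Both routes are correct. Your approach delegates the hard part to a named classical theorem, making the proof shorter but importing interlacing as a black box; the paper's Cauchy--Binet computation is more self-contained and explicit, at the cost of a slightly longer calculation. The two proofs are close in spirit (interlacing is itself proved by variational arguments closely related to what Cauchy--Binet gives here), and each would serve the role in the paper equally well.
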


\begin{proof}
According to \eqref{volume_as_product}, we need to estimate 
\bea
\vol_{l-k}([\DD \SS_{l-k}])=\sqrt{\det(\SS^\top_{l-k}\DD^\top\DD \SS_{l-k})}\,.
\eea
Let $\OO\in\R^{l\times l}$ be a matrix such that its rows form an orthonormal basis consisting of eigenvectors 
of $\DD^\top \DD$.  For convenience, we will denote by $\diag(\lambda_i)$ the diagonal matrix with the eigenvalues $\lambda_1,\ldots, \lambda_l$ on the main diagonal. Then we have $\DD^\top \DD=\OO^\top \diag(\lambda_i)\OO$. 

Let $[l]= \{1,\ldots,l\}$ and let $\binom{[l]}{r}$ be the set of all $r$-element subsets of $[l]$. 
With $\widetilde{\SS_{l-k}}=\OO\SS_{l-k}$ we get by the Cauchy-Binet formula
\bea
\begin{aligned}
\det(\SS^\top_{l-k}\DD^\top \DD \SS_{l-k}) &= \det(\widetilde{\SS_{l-k}}^\top \diag(\lambda_i^{1/2})\diag(\lambda_i^{1/2})\widetilde{\SS_{l-k}})\\
&=\sum_{I\in {[l]\choose l-k}}\det(\diag(\lambda_i^{1/2})^I)^2 (\det(\widetilde{\SS_{l-k}}^I))^2\\
&\ge \left(\prod_{i=1}^{l-k}\lambda_i\right) \sum_{I\in {[l]\choose
    l-k}}(\det(\widetilde{\SS_{l-k}}^I))^2\\
&= \left(\prod_{i=1}^{l-k}\lambda_i\right) \det(\widetilde{\SS_{l-k}}^\top \widetilde{\SS_{l-k}} )= \prod_{i=1}^{l-k}\lambda_i\,.
\end{aligned}
\eea
Here $\diag(\lambda_i^{1/2})^I$ is the $(l-k)\times(l-k)$ diagonal matrix with $\lambda_i^{1/2}$ indexed by $I$
and $\widetilde{\SS_{l-k}}^I$ is the $(l-k)\times(l-k)$ submatrix of
$\widetilde{\SS_{l-k}}$ with rows indexed by $I$. In the second to
last identity we used again the Cauchy-Binet formula.

\qed\end{proof}

From Lemma \ref{volume_via_eigenvalues} we  obtain the following corollary.

%
%\begin{lemma} \label{section_volume_via_B}We have
%\bea \vol(\DD(S\cap (-1,1)^l))= |\det(\DD)|  \frac{ \Delta(\BB)}{ \Delta(\BB\DD)}\vol(S\cap (-1,1)^l)\,.\eea
%\end{lemma}

\begin{cor} \label{coro_section_volume_via_B} Let $S$ be an $(l-k)$-dimensional subspace of $\R^l$ and let  $\DD=\diag(d_1,\ldots, d_l)$ with $0<d_1\le d_2\le \cdots\le d_l$. Then
\be\label{Section_volume_via_the product} \vol_{l-k}(\DD(S\cap (-1,1)^l))\ge 2^{l-k}\prod_{i=1}^{l-k} d_i\,.\ee
\end{cor}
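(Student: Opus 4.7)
The plan is to combine Lemma \ref{volume_via_eigenvalues} with the classical box slicing inequality of Vaaler.

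First I would apply Lemma \ref{volume_via_eigenvalues} directly with $M = S \cap (-1,1)^l$. Since $\DD = \diag(d_1,\ldots,d_l)$, we have $\DD^\top\DD = \diag(d_1^2,\ldots,d_l^2)$, so its eigenvalues in nondecreasing order are $\lambda_i = d_i^2$, and hence $\sqrt{\lambda_i}=d_i$. The lemma then yields
\begin{equation*}
\vol_{l-k}\bigl(\DD(S\cap(-1,1)^l)\bigr) \;\ge\; \left(\prod_{i=1}^{l-k} d_i\right)\vol_{l-k}\bigl(S\cap(-1,1)^l\bigr).
\end{equation*}

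The remaining step is to bound the central section of the cube from below by $2^{l-k}$, i.e.\ to show that
\begin{equation*}
\vol_{l-k}\bigl(S\cap(-1,1)^l\bigr) \;\ge\; 2^{l-k}
\end{equation*}
for any $(l-k)$-dimensional linear subspace $S$ of $\R^l$. This is precisely Vaaler's theorem on central sections of the cube, which asserts that every central $d$-dimensional section of $[-1,1]^l$ has $d$-dimensional volume at least $2^d$. Since this is a well-known result from convex geometry (one of the ``box slicing inequalities'' alluded to in the introduction), I would simply invoke it as an external fact, with a citation to Vaaler's original paper.

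Combining the two displayed estimates immediately gives \eqref{Section_volume_via_the product}. The main (and essentially only) conceptual obstacle here is recognizing that the right lower bound for the cube section is Vaaler's inequality; once that is in hand, the corollary is a two-line consequence of Lemma \ref{volume_via_eigenvalues}. I do not anticipate any technical difficulties beyond correctly matching the ordering of the $d_i$'s with the ordering of the eigenvalues required by Lemma \ref{volume_via_eigenvalues}, which is automatic since squaring preserves the order of positive numbers.
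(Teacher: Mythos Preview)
Your proposal is correct and matches the paper's own proof essentially verbatim: the paper also applies Vaaler's cube slicing inequality to get $\vol_{l-k}(S\cap(-1,1)^l)\ge 2^{l-k}$ and then invokes Lemma~\ref{volume_via_eigenvalues} (with the same identification $\lambda_i=d_i^2$) to conclude.
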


\begin{proof}
By Vaaler's cube slicing inequality \cite{Vaaler1979}, we have $\vol_{l-k}(S\cap (-1,1)^l)\ge 2^{l-k}$.
Now the bound \eqref{Section_volume_via_the product} immediately follows from Lemma \ref{volume_via_eigenvalues}.

\qed\end{proof}

%
%\noindent{\bf Addition}: Just for checking, let us calculate the volume of $L$. Let ${\ve h}\in \ker(A)$ be the vector orthogonal to $\ker(B)=\ker({\ve c}^\top)\cap\ker(A)$ such that 
%${\ve c}\cdot{\ve h}={\ve c}\cdot ({\ve x}^*-{\ve z}^*)$. Since $K\subset \ker(B)$, we have
%\bea
%\vol(L)=\frac{2\vol(K)\|{\ve h}\|_2}{n-m}\,.
%\eea
%We can write ${\ve h}=t{\ve c}|_{\ker(\AA)}$, so it remains to find $t$. We have
%\bea
%{\ve c}\cdot(t{\ve c}|_{\ker(\AA)})={\ve c}\cdot ({\ve x}^*-{\ve z}^*)=IG(A, {\ve b}, {\ve c})\,.
%\eea
%Hence
%\bea
%t=\frac{IG(A, {\ve b}, {\ve c})}{{\ve c}\cdot{\ve c}|_{\ker(\AA)}}=\frac{IG(A, {\ve b}, {\ve c})}{\|{\ve c}|_{\ker(\AA)}\|^2_2}\,
%\eea
%and
%\bea
%\vol(L)=\frac{2 IG(A, {\ve b}, {\ve c})\vol(K)}{(n-m)\|{\ve c}|_{\ker(\AA)}\|_2}\,.
%\eea

\section{Proofs of the transference bounds}

We will derive Theorem  \ref{thm_transference_gap_integer_hull} from Corollary \ref{cor_gap_via_length}  and from the following result. %for $n>m+1$.
\begin{thm} \label{thm_gap_integer_hull} Let $\AA\in \Z^{m\times n}$, with $n>m+1$, be a matrix of rank $m$, $\bb\in \Z^m$ and $\cc\in \R^n$ be a unit cost vector. Suppose that (\ref{initial_IP}) is feasible and bounded. Let $\zz^*=(z_1^*,\ldots, z_n^*)^\top$ be an optimal solution of (\ref{initial_IP}) which is a vertex of $\KP_I(\AA,\bb)$. Assuming without loss of generality $z_1^*\le\cdots\le z_n^*$, the bound
\be\label{Gap_transference_integer_hull_advanced} 
\IG(\AA,\bb,\cc)\le\frac{(n-m)}{\prod_{i=1}^{n-m-1}(z_i^*+1)}\cdot\frac{\Delta(\AA)}{\gcd(\AA)}
\ee
holds.
\end{thm}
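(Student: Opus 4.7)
My plan is to apply Minkowski's first theorem in the geometry of numbers to a symmetric convex body in $L=\ker\AA$ with respect to the lattice $\Lambda=L\cap\Z^n$. Using the standard identification $\det(\Lambda)=\Delta(\AA)/\gcd(\AA)=:V$, I would let $\xx^*$ be a vertex of $\KP(\AA,\bb)$ achieving $\LP(\AA,\bb,\cc)$, so that $\IG(\AA,\bb,\cc)=\cc\cdot\zz^*-\cc\cdot\xx^*$.

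The key construction is the symmetric convex body
\[
K_g=\{\yy\in L:|y_i|\le z_i^*+1 \text{ for } i=1,\ldots,n-m-1;\ |\cc\cdot\yy|\le g\}
\]
in $L$, parameterised by $g>0$. It is a parallelotope, being defined by $n-m$ linear inequalities on the $(n-m)$-dimensional $L$. Writing $L$ in an orthonormal basis, its volume is
\[
\vol_{n-m}(K_g)=\frac{2^{n-m}\,g\,\prod_{i=1}^{n-m-1}(z_i^*+1)}{|\det\Phi|},
\]
where $\Phi$ is the $(n-m)\times(n-m)$ matrix whose rows are the orthogonal projections to $L$ of $\ve e_1,\ldots,\ve e_{n-m-1}$ and $\cc$. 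By Hadamard's inequality, together with $\|\cc\|_2=\|\ve e_i\|_2=1$, one gets $|\det\Phi|\le 1$, yielding $\vol_{n-m}(K_g)\ge 2^{n-m}\,g\,\prod_{i=1}^{n-m-1}(z_i^*+1)$; this estimate is in the spirit of Corollary~\ref{coro_section_volume_via_B} and can alternatively be derived along the lines of Lemma~\ref{section_volume_regular}.

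Choosing $g\ge V/\prod_{i=1}^{n-m-1}(z_i^*+1)$ then gives $\vol_{n-m}(K_g)\ge 2^{n-m}V$, so Minkowski's first theorem produces a nonzero lattice vector $\yy\in\Lambda\cap K_g$. By integrality, $|y_i|\le z_i^*$ for $i\le n-m-1$, and $|\cc\cdot\yy|\le g$.

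The main obstacle is converting $\yy$ into an upper bound on $\IG$. Because $K_g$ does not constrain $y_i$ for $i\ge n-m$, we cannot immediately guarantee $\zz^*+\yy\in\KP(\AA,\bb)$. I would use $\AA\yy=0$ to express $y_{n-m},\ldots,y_n$ as linear functions of $(y_1,\ldots,y_{n-m-1},\cc\cdot\yy)$ and bound them via subdeterminants of $\AA$; then, invoking the IP-optimality of $\zz^*$ as a vertex of $\KP_I$ on a feasible modification of $\zz^*\pm\yy$, one extracts a sign constraint on $\cc\cdot\yy$. Combining with $|\cc\cdot\yy|\le g$ should yield the stated inequality, with the factor $(n-m)$ in the numerator plausibly emerging as a union-bound-style slack in this feasibility step.
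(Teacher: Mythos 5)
Your instinct---apply Minkowski's first theorem in $\ker(\AA)$ to a symmetric convex body whose side lengths in the coordinate directions are $z_i^*+1$, and use $\det(\Lambda(\AA))=\Delta(\AA)/\gcd(\AA)$---is the right one, and it is the same starting point as the paper. But the body you chose does not work, and you correctly flag the exact place it breaks: $K_g$ constrains only the coordinates $y_1,\ldots,y_{n-m-1}$, so a lattice point $\yy$ produced by Minkowski gives no control over $y_{n-m},\ldots,y_n$, and hence no guarantee that $\zz^*\pm\yy$ stays in the nonnegative orthant. Your proposed repair---express the remaining coordinates via $\AA\yy=\ve 0$ and bound them by subdeterminants of $\AA$---would reintroduce factors of the order $\Delta_m(\AA)$ and destroy the claimed bound; it does not lead to $(n-m)/\prod(z_i^*+1)$. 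The factor $(n-m)$ in the statement is not a union-bound slack.

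The missing idea is to put the box constraint on \emph{all} $n$ coordinates (so that feasibility of $\zz^*\pm\yy$ is automatic for integer $\yy$, as $|y_i|<z_i^*+1$ forces $z_i^*+y_i\geq 0$), and to recover the lost degree of freedom with a bi-pyramid rather than a slab. Concretely, the paper takes $K=\ker(\BB)\cap\prod_i(-d_i,d_i)$ with $\BB=\binom{\AA}{\cc^\top}$ and $d_i=z_i^*+1$; this is $(n-m-1)$-dimensional, and Corollary~\ref{coro_section_volume_via_B} gives $\vol_{n-m-1}(K)\geq 2^{n-m-1}\prod_{i=1}^{n-m-1}d_i$. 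It then forms the origin-symmetric bi-pyramid $L=\conv(\xx^*-\zz^*,\,K,\,\zz^*-\xx^*)\subset\ker(\AA)$, whose $(n-m)$-volume is $\tfrac{2}{n-m}\,\cc\cdot(\zz^*-\xx^*)\,\vol_{n-m-1}(K)/\|\cc|\ker(\AA)\|_2$; this is where $(n-m)$ enters, as the pyramidal $1/(n-m)$ factor. Assuming the bound fails makes $\vol_{n-m}(L)>2^{n-m}\det\Lambda(\AA)$, Minkowski gives a nonzero $\yy\in L\cap\Lambda(\AA)$, and every such $\yy$ has all coordinates bounded, so $\zz^*+\yy$ (or $\zz^*-\yy$) is feasible; the split into $\cc\cdot\yy=0$ (contradicting that $\zz^*$ is a vertex of $\KP_I$) versus $\cc\cdot\yy\neq 0$ (contradicting optimality) finishes the proof. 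Without the bi-pyramid over a fully-bounded cross-section, the feasibility step you were stuck on cannot be closed while preserving the stated constants.
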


\begin{proof}

%For convenience, we will work with an open polyhedron 
%\bea
%\PKP(\AA, {\ve b})=\KP(\AA, {\ve b})+(-1,1)^n\cap \ker(\AA)\,.
%\eea
%Notice that $\KP(\AA, {\ve b})\cap\Z^n=\PKP(\AA, {\ve b})\cap\Z^n$.

Let  ${\ve c}|{\ker(\AA)}$  denote the orthogonal projection of the vector $\cc$ on the kernel subspace $\ker(\AA)=\{{\ve x}\in\R^n: \AA{\ve x}={\ve 0}\}$ of the matrix $\AA$. Observe first that if ${\ve c}$ is orthogonal to $\ker(\AA)$, then $\IG(\AA,\bb,\cc)=0$ and the bound \eqref{Gap_transference_integer_hull_advanced} holds. Hence, we may assume without loss of generality that ${\ve c}|{\ker(\AA)}$ is a nonzero vector.

Suppose, to derive a contradiction, that the bound \eqref{Gap_transference_integer_hull_advanced} does not hold. Then there exists a vertex ${\ve x}^*$ of $\KP(\AA,\bb)$ and
a vertex ${\ve z}^*$ of $P_I(\AA, {\ve b})$ optimising \eqref{initial_IP}, such that, assuming $z_1^*\le\cdots\le z_n^*$, we have
\be\label{opposite_bound_integer_hull} 
{\ve c}\cdot({\ve z}^*-{\ve x}^*)> \frac{(n-m)}{\prod_{i=1}^{n-m-1}(z_i^*+1)}\cdot\frac{\Delta(\AA)}{\gcd(\AA)}\,.
\ee

Let $d_i=z_i^*+1$, $i\in[n]$, $\DD=\diag(d_1,\ldots, d_n)$ and let $\BB\in\Z^{(m+1)\times n}$ be the matrix obtained by adding the $(m+1)$-st row ${\ve c}^\top$ to the matrix $\AA$. Let further $V=\ker(\BB)$. Consider the box section
\bea
K=V\cap (-d_1, d_1)\times\cdots\times (-d_n, d_n)\,.
\eea
We can write $K=\DD M$, where $M$ is a $(n-m-1)$-dimensional section of the cube $(-1,1)^n$.
Hence, by Corollary \ref{coro_section_volume_via_B},
\be\label{vol_K_lower_bound}
\vol_{n-m-1}(K)\ge 2^{n-m-1}\prod_{i=1}^{n-m-1} d_i\,.
\ee
%Notice that the polyhedron $\PKP(\AA, {\ve b})$ contains the set
%\bea
%K={\ve z}^*+S\,.
%\eea
%%
Consider
the origin-symmetric convex set
\bea
L=\conv({\ve x}^*-{\ve z}^*, K, {\ve z}^*-{\ve x}^*)\subset \ker(\AA)\,.
\eea
The set $L$ is a bi-pyramid with apexes $\pm ({\ve x}^*-{\ve z}^*)$ and 
$(n-m-1)$-dimensional basis $K$. As $K\subset \ker(\AA)\cap\ker(\ve c)$
the height of ${\ve x}^*-{\ve z}^*$ over $K$ is given by
\bea
 \frac{{\ve c}|\ker(\AA)\cdot({\ve z}^*-{\ve x}^*)}{\|{\ve c}|{\ker(\AA)}\|_2} =\frac{{\ve c}\cdot({\ve z}^*-{\ve x}^*)}{\|{\ve c}|{\ker(\AA)}\|_2}.
\eea 
Hence, we have 
\bea
\vol_{n-m}(L)=\frac{2 \,{\ve c}\cdot({\ve z}^*-{\ve x}^*)\,\vol_{n-m-1}(K)}{(n-m)\|{\ve c}|{\ker(\AA)}\|_2}\,.\eea
Then, using \eqref{vol_K_lower_bound} and \eqref{opposite_bound_integer_hull} and noting that the assumption $\|{\ve c}\|_2= 1$ implies $\|{\ve c}|{\ker(\AA)}\|_2\le 1$, we obtain the lower bound
\be\label{volume_bound_for_L}
%\vol(L)\ge \frac{2^{n-m} {\ve c}\cdot({\ve z}^*-{\ve x}^*)\prod_{i=1}^{n-m-1} d_i}{(n-m)}
\vol_{n-m}(L)>2^{n-m}\frac{\Delta(\AA)}{\gcd(\AA)}\,.
\ee
Observe that the lattice $\Lambda(\AA)=\ker(\AA)\cap\Z^n$ has determinant 
\be\label{lat_det_lambda}
\det(\Lambda(\AA))=\frac{\Delta(\AA)}{\gcd(\AA)}\,
\ee
(see e. g. \cite[Chapter~1, \S 1]{Skolem}). The $(n-m)$-dimensional subspace $\ker(\AA)$ can be considered as a usual Euclidean $(n-m)$-dimensional space.  Therefore, by \eqref{volume_bound_for_L}, \eqref{lat_det_lambda} and Minkowski's first fundamental theorem (in the form of Theorem II in Chapter III of \cite{cassels1996introduction}), applied to the set $L$ and the lattice $\Lambda(\AA)$,  there is a nonzero point ${\ve y}\in L \cap \Lambda(\AA)$. 

Suppose first that ${\ve c}\cdot{\ve y}=0$. Consider the points ${\ve y}^+={\ve z}^*+ {\ve y}$ and ${\ve y}^-={\ve z}^*- {\ve y}$. We have  ${\ve y}^+, {\ve y}^-\in ({\ve z}^*+K)$ and, consequently,  ${\ve y}^+, {\ve y}^-\in \KP(\AA, {\ve b})$.  Further, ${\ve z}^*$ is the midpoint of the segment with endpoints ${\ve y}^+$ and ${\ve y}^-$, contradicting the choice of ${\ve z}^*$ as a vertex of the integer hull $P_I(\AA, {\ve b})$. 

It remains to consider the case ${\ve c}\cdot{\ve y}\neq 0$. Since $L$ is origin-symmetric, we may assume without loss of generality that ${\ve c}\cdot{\ve y}<0$. Observe that the point ${\ve y}^+={\ve z}^*+ {\ve y}$ is in the set $\conv({\ve z}^*+K,{\ve x}^*)$ and hence ${\ve y}^+\in  \KP(\AA, {\ve b})$. Now, it is sufficient to notice that  ${\ve c}\cdot{\ve y}^+<{\ve c}\cdot {\ve z}^*$, contradicting the optimality of ${\ve z}^*$.
%Therefore, we derive the desired contradiction.

\qed\end{proof}

\subsection{Proof of Theorem \ref{thm_transference_gap_integer_hull} }

 Let $I=\{i_1,\ldots, i_k\} \subset  [n]$ with $i_1<i_2<\cdots<i_k$. We will use the notation  $\AA_I$ for the $m \times k$ submatrix  of $\AA$ with columns indexed by $I$.  In the same manner, given ${\ve x}\in \R^n$, we will denote by ${\ve x}_I$ the vector $(x_{i_1}, \ldots, x_{i_k})^{\top}$. By $\R^I$ we denote the $k$-dimensional real space with coordinates indexed by $I$. %The complement of $I$ in $[n]$ will be denoted as $\bar I$.

Let $\xx^{*}$ be a vertex optimal solution to (\ref{initial_LP}). Clearly, we may assume that $\xx^{*}\neq \zz^{*}$.
Let $I\subset[n]$ denote the set of indices $i$ for which at least one of $z_i^*,x_i^*$ is non-zero. We consider a new linear program
\be\label{new_linear_prog}
\min\{\cc_I\cdot\xx_I:\AAhat_I\xx_I=\hat{\bb},\xx_I\geq 0\}\,,
\ee
where ${\AAhat}_I\in\Z^{\hat{m}\times\hat{n}}$ is a full-row-rank matrix with $\gcd({\AAhat}_I)=1$ and $\hat{\bb}\in\Z^{\hat{m}}$ such that ${\AAhat}_I\xx_I=\hat{\bb}$ and $\AA_I\xx_I=\bb$ decribe the same affine subspace in $\R^I$. Note that $\xx_I^*$ and $\zz_I^*$ are optimal fractional and integral solutions, respectively, and that $\zz_I^*$ is a vertex of the integer hull of $\KP({\AAhat}_I,\hat{\bb})$. Note also that
\be\label{new_delta_lower_bound}
\Delta({\AAhat}_I)\leq\frac{\Delta(\AA)}{\gcd(\AA)}\,,
\ee
one can see this by observing that the quantity on the left is a divisor of the volume of the orthogonal projection of a parallelepiped whose volume is given by the quantity on the right. 

If $\hat{n}=\hat{m}+1$, then the bound \eqref{Gap_transference_integer_hull} immediately follows from the bound 
\eqref{Gap_zonotope} in Corollary \ref{cor_gap_via_length}. Otherwise, suppose that $\hat{n}>\hat{m}+1$. We have then that ${\AAhat}_I$, $\hat{\bb}$, $\cc_I/\norm{\cc_I}_2$, $\xx_I^*$, $\zz_I^*$ satisfy the hypotheses of Theorem~\ref{thm_gap_integer_hull}. We therefore get
\be\label{new_IG_bound}
\IG(\AA,\bb,\cc)=\IG(\AAhat_I,\hat{\bb},\cc_I)\leq\norm{\cc_I}_2\cdot\frac{\hat{n}-\hat{m}}{\prod_i (z_i^*+1)}\cdot\Delta(\AAhat_I),
\ee
where the product in the denominator is over the $\hat{n}-\hat{m}-1$ smallest coordinates of $\zz_I^*$.

Now, $\hat{n}$ is equal to $\norm{\zz^*+\xx^*}_0$. Also, $\xx^*$ has support of size at most $\hat{m}$, since $\norm{\xx^*}_0=\norm{\xx_I^*}_0$ and $\xx_I^*$ is a vertex of the new linear program \eqref{new_linear_prog}. Thus we get
\be
\label{numerator_upper_bound}
\hat{n}-\hat{m}\leq\norm{\zz^*+\xx^*}_0-\norm{\xx^*}_0\leq\norm{\zz^*}_0=s.
\ee
On the other hand, we have the following lower bound for the product in the denominator of \eqref{new_IG_bound}:
\be
\label{denominator_lower_bound}
\prod_i (z_i^*+1)\geq 2^{\hat{n}-\hat{m}-1-\left|I\,\backslash\,\text{Supp}(\zz_I^*)\right|}
=2^{s-\hat{m}-1}
\geq 2^{s-m-1}\,.
\ee
Combining together \eqref{new_IG_bound}, \eqref{numerator_upper_bound}, \eqref{denominator_lower_bound}, and the fact that $\norm{\cc_I}_2\leq1$, we get
\be\label{new_IG_bound_combined}
\IG(\AA,\bb,\cc)\leq\frac{s}{2^{s-m-1}}\cdot\Delta(\AAhat_I)\,.
\ee
The desired conclusion \eqref{Gap_transference_integer_hull} then follows from \eqref{new_delta_lower_bound} and \eqref{new_IG_bound_combined}. 

\subsection{Proof of Corollary \ref{cor_gap_immediate}}

Let $I$ and $\AAhat_I$ be as in the proof of Theorem~\ref{thm_transference_gap_integer_hull}. We will derive the bounds \eqref{eq:Gap_transference_Delta_m} and \eqref{eq:Gap_transference_Delta_1}
from the bound \eqref{new_IG_bound_combined}. Choose any $J\subset [n]\backslash I$ that is minimal with respect to the property that $\AA_{I\cup J}$ has rank $m$. Thus, $\left|J\right|=m-\hat{m}$, and \be
\label{size_of_I_cup_J}
\left|I\cup J\right| = \hat{n}+(m-\hat{m}) \leq s+\hat{m} +(m-\hat{m}) = s+m.
\ee
As with \eqref{new_delta_lower_bound}, we have
\be\label{new_delta_lower_bound_I_cup_J}
\Delta({\AAhat}_I)\leq\frac{\Delta(\AA_{I\cup J})}{\gcd(\AA_{I\cup J})}\leq\frac{\Delta(\AA_{I\cup J})}{\gcd(\AA)}\,.
\ee
By \eqref{size_of_I_cup_J} and the Cauchy-Binet formula, we have
\be\label{Cauchy-Binet-cor}
\Delta(\AA_{I\cup J})\le {\left|I\cup J\right|\choose m}^{1/2}\Delta_m(\AA_{I\cup J})\le {s+m\choose m}^{1/2}\Delta_m(\AA)\,. % <\left( \frac{n e}{m}\right)^{m/2}\Delta_m(\AA)\,.
\ee
Combining \eqref{new_IG_bound_combined}, \eqref{new_delta_lower_bound_I_cup_J}, and \eqref{Cauchy-Binet-cor}, we obtain the bound \eqref{eq:Gap_transference_Delta_m}. On the other hand, if we use \eqref{size_of_I_cup_J} and Hadamard's inequality, we get
\be\label{Hadamard-cor}
\Delta(\AA_{I\cup J})\le (\sqrt{\left|I\cup J\right|}\cdot\Delta_1(\AA_{I\cup J}))^m\le (\sqrt{s+m}\cdot\Delta_1(\AA))^m\,.
\ee
Combining \eqref{new_IG_bound_combined}, \eqref{new_delta_lower_bound_I_cup_J}, and \eqref{Hadamard-cor}, we obtain the bound \eqref{eq:Gap_transference_Delta_1}.

\section{Proof of the $\ell_2$-distance proximity bound}

First, we will prove two lemmas needed for the proof of Theorem \ref{thm:UpperBound}.

Let ${\ve y} \in\R^n$ and let
\bea
C^n({\ve y})=\{{\ve x}\in  \R^n: \|{\ve x}-{\ve y} \|_{\infty} < 1 \}
\eea
be an open cube in $\R^n$ with edge length $2$ centered at the point ${\ve y}$. 
%Given a set $K\subset\R^n$, we will also use the notation $\interior\, K$ for the interior of $K$. 
Given two points ${\ve u}, {\ve v}\in \R^n$ we will
consider an open set $D({\ve u}, {\ve v})$ defined as
\bea
D({\ve u}, {\ve v})=\conv(C^n({\ve u}), C^n({\ve v}))\,.
\eea

\begin{lemma}
\label{D_property} Let ${\ve u}, {\ve v}\in\R^n_{\ge 0}$. Then
$D({\ve u}, {\ve v})\cap\Z^n=D({\ve u}, {\ve v})\cap\Z^n_{\ge 0}\,.$
\end{lemma}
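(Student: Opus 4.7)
The inclusion $D(\uu,\vv)\cap\Z^n_{\ge 0}\subseteq D(\uu,\vv)\cap\Z^n$ is immediate, so the content of the lemma is the reverse inclusion: every integer point of $D(\uu,\vv)$ is automatically componentwise nonnegative. My plan is to prove this coordinate by coordinate, exploiting the openness of the cubes $C^n(\uu)$ and $C^n(\vv)$ together with the hypothesis $\uu,\vv\in\R^n_{\ge 0}$.

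First I would fix an arbitrary $\zz\in D(\uu,\vv)\cap\Z^n$ and write it as a convex combination $\zz=\lambda\xx+(1-\lambda)\yy$ with $\xx\in C^n(\uu)$, $\yy\in C^n(\vv)$, and $\lambda\in[0,1]$. The definition of $C^n(\cdot)$ gives the strict inequalities $x_i>u_i-1$ and $y_i>v_i-1$ for every coordinate $i\in[n]$. Since $u_i\ge 0$ and $v_i\ge 0$ by assumption, this yields $x_i>-1$ and $y_i>-1$, and therefore
\[
z_i=\lambda x_i+(1-\lambda)y_i>-1.
\]

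Now I would invoke the integrality of $\zz$: since $z_i\in\Z$ and $z_i>-1$, we must have $z_i\ge 0$. As $i$ was arbitrary, $\zz\in\Z^n_{\ge 0}$, which completes the reverse inclusion.

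The argument is essentially mechanical once the convex combination is written down, so there is no real obstacle to overcome; the only point deserving any care is the justification that every point of $D(\uu,\vv)=\conv(C^n(\uu),C^n(\vv))$ really is a convex combination of one point from each cube (as opposed to combinations involving more than two points), which holds because $D(\uu,\vv)$ is the union of the segments joining $C^n(\uu)$ to $C^n(\vv)$.
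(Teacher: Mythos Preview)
Your proof is correct and follows essentially the same approach as the paper's: write an integer point of $D(\uu,\vv)$ as a convex combination $\lambda\xx+(1-\lambda)\yy$ with $\xx\in C^n(\uu)$, $\yy\in C^n(\vv)$, and use the strict inequality $x_i,y_i>-1$ together with integrality to force $z_i\ge 0$. The paper phrases it as a contradiction argument and leaves implicit the fact (which you helpfully spell out) that the convex hull of two convex sets consists of segments joining them.
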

\begin{proof}
Suppose, to derive a contradiction, that there exists an integer point ${\ve z}=(z_1,\ldots, z_n)^\top \in D({\ve u}, {\ve v})$ such that $z_j\le -1$ for some $j\in [n]$.
Then there exist points ${\ve x}=(x_1,\ldots, x_n)^\top\in C^n({\ve u})$ and  ${\ve y}=(y_1,\ldots, y_n)^\top\in  C^n({\ve v})$ such that for some $\lambda\in [0,1]$ 
\bea
z_j=\lambda x_j + (1-\lambda) y_j\,.
\eea
Therefore, since $x_j > -1$ and $y_j> -1$, we must have $z_j>-1$. 
\qed\end{proof}

Next, we consider an origin-symmetric open convex set $E=E({\ve u}, {\ve v})$ defined as
\bea
E=\conv(C^n({\ve u}-{\ve v}), C^n({\ve v}-{\ve u}))\,.
\eea
Notice that
\be\label{zonotope_union}
E= (D({\ve u}, {\ve v})-{\ve v})\cup(-D({\ve u}, {\ve v})+{\ve v})\,.
\ee

\begin{lemma}
\label{E_property} Suppose that ${\ve u},{\ve v}\in \KP(\AA, {\ve b})$. Then the bound 
\be\label{E_property_bound} 
\vol_{n-m}(E\cap\ker(\AA))\ge 2^{n-m}(1+\|{\ve u}-{\ve v}\|_2)
\ee
holds.
\end{lemma}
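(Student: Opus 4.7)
The plan is to identify $E$ explicitly as a Minkowski sum, apply a Cavalieri-style slicing along the direction $\ww:=\uu-\vv$, and then invoke Vaaler's cube-slicing inequality twice, once in dimension $n-m$ and once in the hyperplane perpendicular to $\ww$ inside $\ker(\AA)$.

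First I would observe that
\[
E=\conv\bigl(C^n(\ww),\, C^n(-\ww)\bigr) = [-\ww,\ww] + (-1,1)^n,
\]
since any convex combination of a point $\ww+\xx_1$ with $\xx_1\in(-1,1)^n$ and a point $-\ww+\xx_2$ with $\xx_2\in(-1,1)^n$ can be rewritten as $t\ww+\yy$ with $t\in[-1,1]$ and $\yy\in(-1,1)^n$, and conversely every such $t\ww+\yy$ is trivially realized with $\xx_1=\xx_2=\yy$. Because $\uu,\vv\in \KP(\AA,\bb)$, we have $\AA\ww=\0$, so $[-\ww,\ww]\subset\ker(\AA)$. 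Using the identity $(A+B)\cap L = A + (B\cap L)$ valid whenever $A\subset L$, this yields
\[
E\cap\ker(\AA) = [-\ww,\ww] + M, \qquad M:=(-1,1)^n\cap\ker(\AA).
\]

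Next, set $h=\|\ww\|_2$ (the case $h=0$ being handled directly by Vaaler's inequality applied to $M$), and decompose orthogonally $\ker(\AA) = \lin(\ww)\oplus W$ with $W=\ker(\AA)\cap \ww^\perp$, so $\dim W = n-m-1$. For $\yy\in W$, let $f(\yy)=\{\tau\in\R:\yy+\tau\ww/h\in M\}$, which is an interval (possibly empty) since $M$ is convex. Slicing $E\cap\ker(\AA)$ in the direction of $\ww$ via Fubini gives, for each $\yy\in W$, the one-dimensional slice $f(\yy)+[-h,h]$, whose length equals $|f(\yy)|+2h$ whenever $f(\yy)\neq\nothing$ and $0$ otherwise. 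Integrating yields
\[
\vol_{n-m}(E\cap\ker(\AA)) = \vol_{n-m}(M) + 2h\cdot\vol_{n-m-1}(M\,|\,W),
\]
where $M\,|\,W$ denotes the orthogonal projection of $M$ onto $W$.

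Finally, I would apply Vaaler's cube slicing inequality (already used in Corollary~\ref{coro_section_volume_via_B}) at two different dimensions. Directly, $\vol_{n-m}(M)\ge 2^{n-m}$. For the projection, I would observe $M\,|\,W \supseteq (-1,1)^n\cap W$: any $\yy\in(-1,1)^n\cap W\subseteq \ker(\AA)\cap(-1,1)^n = M$ projects to itself, so it lies in $M\,|\,W$. Since $W$ is an $(n-m-1)$-dimensional subspace of $\R^n$, Vaaler gives $\vol_{n-m-1}((-1,1)^n\cap W)\ge 2^{n-m-1}$. Combining,
\[
\vol_{n-m}(E\cap\ker(\AA)) \ge 2^{n-m} + 2h\cdot 2^{n-m-1} = 2^{n-m}(1+\|\uu-\vv\|_2),
\]
which is \eqref{E_property_bound}. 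The main conceptual point, and the only nontrivial step, is the observation that the projection of the cube section $M$ onto the hyperplane $W\subset\ker(\AA)$ still contains a full-dimensional cube section of $(-1,1)^n$ by $W$; everything else is a direct Cavalieri computation combined with Vaaler's inequality.
\qed
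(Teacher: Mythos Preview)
Your proof is correct and follows essentially the same route as the paper: both arguments apply Vaaler's inequality twice, once to the section $M=(-1,1)^n\cap\ker(\AA)$ and once to the $(n-m-1)$-dimensional section $(-1,1)^n\cap W$ with $W=\ker(\AA)\cap\ww^\perp$. Your packaging is a bit cleaner---identifying $E=[-\ww,\ww]+(-1,1)^n$ as a Minkowski sum and using Fubini yields the exact identity $\vol_{n-m}(E\cap\ker(\AA))=\vol_{n-m}(M)+2h\,\vol_{n-m-1}(M\,|\,W)$, whereas the paper builds three explicit non-overlapping pieces (two shifted half-sections $S^\pm\pm\ww$ and a cylinder over $S=M\cap W$) and only obtains the lower bound $\vol_{n-m}(M)+2h\,\vol_{n-m-1}(S)$; since $S\subseteq M\,|\,W$, the two are consistent and the final estimate is identical. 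Your version also avoids the paper's separate treatment of the case $n=m+1$ (where $W=\{0\}$ and the $0$-dimensional ``section'' volume is trivially $1=2^0$).
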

\begin{proof}
First, we will separately consider the case $n=m+1$. Then $\ker(\AA)$ has dimension one
and, noticing that ${\ve u}-{\ve v}\in \ker(\AA)$ we can write
\bea
\vol_{1}(E\cap\ker(\AA))=\vol_{1}(C^n({\ve 0})\cap\ker(\AA))+2\|{\ve u}-{\ve v} \|_2\,.
\eea
Since $\vol_{1}(C^n({\ve 0})\cap\ker(\AA))\ge 2$, we obtain the bound \eqref{E_property_bound}.

For the rest of the proof we assume that $n>m+1$. 
If ${\ve u}={\ve v}$, then $E=C^n({\ve 0})$ and the bound \eqref{E_property_bound} immediately follows from Vaaler's cube slicing inequality \cite{Vaaler1979}. Hence, we may also assume without loss of generality that ${\ve u}-{\ve v}\neq {\ve 0}$.

Let
\bea
S=C^n({\ve 0})\cap \ker(\AA)\cap \ker(({\ve u}-{\ve v})^\top)\,.
\eea
The set $S$ is a section of the open cube $C^n({\ve 0})$. Since ${\ve u}-{\ve v}\in \ker(\AA)\setminus \{\ve 0\}$, the section $S$ has dimension  $n-m-1$.
Let further 
\bea
S^+=\{{\ve x}\in C^n({\ve 0})\cap \ker(\AA): ({\ve u}-{\ve v})\cdot{\ve x}>0\}
\eea
and 
\bea
S^-=\{{\ve x}\in C^n({\ve 0})\cap \ker(\AA): ({\ve u}-{\ve v})\cdot{\ve x}<0\}\,.
\eea
By construction, $S^+$, $S^-$ and $S$ do not overlap and $C^n({\ve 0})\cap \ker(\AA)=S^+\cup S^-\cup S$. Further, by Vaaler's cube slicing inequality \cite{Vaaler1979}, we have
\be\label{volume_of_union}
\vol_{n-m}(S^+)+\vol_{n-m}(S^-)=\vol_{n-m}(C^n({\ve 0})\cap \ker(\AA))\ge 2^{n-m}
\ee
and
\be\label{volume_of_section_S}
\vol_{n-m-1}(S)\ge 2^{n-m-1}\,.
\ee
Observe that $E\cap\ker(\AA)$ contains the sets $S^+ + {\ve u}-{\ve v}$,  $S^- + {\ve v}-{\ve u}$ and the cylinder
$\conv( {\ve u}-{\ve v} +S,   {\ve v}-{\ve u}+S)$. These three sets do not overlap and, using \eqref{volume_of_union} and \eqref{volume_of_section_S},
we have
\bea\begin{aligned}
\vol_{n-m}(E\cap\ker(\AA))& \ge \vol_{n-m}(S^+)+\vol_{n-m}(S^-)\\
&+ \vol_{n-m}(\conv( {\ve u}-{\ve v} +S,   {\ve v}-{\ve u}+S))\\
& =\vol_{n-m}(  C^n({\ve 0})\cap \ker(\AA)) + 2\,\vol_{n-m-1}(S)\,\|{\ve u}-{\ve v} \|_2\\
& \ge 2^{n-m}(1+\|{\ve u}-{\ve v} \|_2)\,.
\end{aligned}
\eea
Hence, we obtain the bound \eqref{E_property_bound}.
\qed

\end{proof}

\subsection{Proof of Theorem \ref{thm:UpperBound}}

We will say that $B\subset [n]$ is a {\em basis} of $\AA$ if $|B|=m$ and the submatrix $\AA_B$  is nonsingular. %
Take any vertex ${\ve x}^*\in \KP(\AA, {\ve b})$. There is a basis $B$ of $\AA$ such that, denoting by $N$ the complement of $B$ in $[n]$, we have
\bea %\label{vertex_assumption}
 {\ve x}^*_B = \AA_B^{-1}{\ve b} \mbox{ and }{\ve x}^*_{N}={\ve 0}_{N}\,.
\eea

Choose an integer point ${\ve z}^*\in \KP(\AA, {\ve b})$ with the minimum possible distance between the points ${\ve x}^*_N={\ve 0}_{N}$ and ${\ve z}^*_N$. Then
\be\label{minimum_distance_choice}
\|{\ve z}^*_N\|_2=\min \{\|{\ve y}_N\|_2: {\ve y}\in \KP(\AA, {\ve b})\cap\Z^n\}\,.
\ee 
Suppose, to derive a contradiction, that the bound \eqref{t:Bound_via_pencils} does not hold for the point ${\ve z}^*$. Then, using \eqref{lat_det_lambda},
\be\label{lower_bound_distance}
\|{\ve x}^*-{\ve z}^*\|_2> \frac{\Delta(\AA)}{\gcd(\AA)}-1 = \det(\Lambda(\AA))-1\,.
\ee
Recall that we denote by $\Lambda(\AA)$ the  lattice formed by all integer points in the kernel subspace of the matrix $\AA$.

The lower bound \eqref{lower_bound_distance} and Lemma \ref{E_property}  imply that for $E=E({\ve x}^*, {\ve z}^*)$
we have
\be\label{vol_E_kerA_bound}
\vol(E\cap\ker(\AA))> 2^{n-m} \det(\Lambda(\AA))\,.
\ee
The $(n-m)$-dimensional subspace $\ker(\AA)$ can be considered as a usual Euclidean $(n-m)$-dimensional space. 
 Noting the bound \eqref{vol_E_kerA_bound}, Minkowski's first fundamental theorem (in the form of Theorem II in Chapter III of \cite{cassels1996introduction}) implies that the set $E\cap\ker(\AA)$
 contains nonzero points $\pm {\ve z}$ of the lattice $\Lambda(\AA)$. Using \eqref{zonotope_union}, we may assume without loss of generality
 that we have  ${\ve z}\in D({\ve x}^*, {\ve z}^*)-{\ve z}^*$. Therefore, the point ${\ve w}={\ve z}+{\ve z}^*$ is in the set $D({\ve x}^*, {\ve z}^*)\cap (\ker(\AA)+{\ve z}^*)$. By Lemma \ref{D_property}, ${\ve w}\in \Z^n_{\ge 0}$ and, hence, ${\ve w}\in \KP(\AA, {\ve b})$. 
 
 Next, we will show that
 $\|{\ve w}_N\|_2< \|{\ve z}^*_N\|_2$, contradicting \eqref{minimum_distance_choice}.
Notice first that for any ${\ve x}\in \KP(\AA, {\ve b})$ we have ${\ve x}_B=\AA_B^{-1}({\ve b}-\AA_N{\ve x}_N)$. Hence ${\ve w}_N = {\ve z}^*_N$ implies that  ${\ve w}={\ve z}^*$.
Therefore, we may assume that ${\ve w}_N \neq {\ve z}^*_N$. 
%
%Let us denote by $\pi$ the coordinate projection map that forgets coordinates indexed by $B$. Then 
%\be\label{projected_zonotope}
%\pi(D)=\conv(\pi(C^n({\ve 0})), \pi(C^n({\ve z}^*)))\,.
%\ee

Take any index $j\in N$. Since ${\ve w}\in D({\ve x}^*, {\ve z}^*)$, %and \eqref{projected_zonotope} holds, 
we have $w_j\le z^*_j$. Hence there is at least one index $j_0\in N$ with $w_{j_0}< z^*_{j_0}$. Therefore  $\|{\ve w}_N\|_2< \|{\ve z}^*_N\|_2$ and we obtain a contradiction with \eqref{minimum_distance_choice}.

% % 
%Next, take any $j\in \bar \gamma$. Then, by construction of the set $D$, we have $\|x^*_j-w_j\|_2=w_j\le z_j^*$.

%  Let $A_\gamma$ be a basis of $A$ corresponding to the vertex ${\ve x}^*$. Take any index $j\in \gamma$. By construction of the set $D$, if $x^*_j\ge z^*_j$ then $x^*_j\ge w_j\ge z^*_j$ and if  $x^*_j\le z^*_j$, then $x^*_j\le w_j\le z^*_j$.
% % 
%Next, take any $j\in \bar \gamma$. Then, by construction of the set $D$, we have $\|x^*_j-w_j\|_2=w_j\le z_j^*$.

\section{Acknowledgement}

The authors thank the anonymous referees for their valuable comments and suggestions.

\end{document}